\newtheorem{thm}{Theorem}[section]
\newtheorem{lem}[thm]{Lemma}
\newtheorem{prop}[thm]{Proposition}
\theoremstyle{definition}
\newtheorem{defn}[thm]{Definition}
\theoremstyle{remark}
\newtheorem{rem}[thm]{Remark}
\newtheorem{exa}[thm]{Example}
\numberwithin{equation}{section}
\newcommand{\abs}[1]{\left\vert#1\right\vert}
\newcommand{\set}[1]{\left\{#1\right\}}
\newcommand{\Real}{\mathbb R}
\newcommand{\Natural}{\mathbb N}
\newcommand{\nin}{n \in \Natural}
\newcommand{\kin}{k \in \Natural}
\newcommand{\A}{\mathcal{A}}
\newcommand{\B}{\mathcal{B}}
\newcommand{\such}{{\ | \ }}
\newcommand{\limn}{\lim_{n \to \infty}}
\newcommand{\limsupn}{\limsup_{n \to \infty}}
\newcommand{\limk}{\lim_{k \to \infty}}
\newcommand{\dfn}{\, := \,}
\newcommand{\prob}{\mathbb{P}}
\newcommand{\oprob}{\overline{\prob}}
\newcommand{\plim}{\Lb_+^0 \text{-} \lim}
\newcommand{\plimn}{\plim_{n \to \infty}}
\newcommand{\plimk}{\plim_{k \to \infty}}
\newcommand{\conv}{\mathsf{conv}}
\newcommand{\oconv}{\overline{\mathsf{conv}}}
\newcommand{\qprob}{\mathbb{Q}}
\newcommand{\expec}{\mathbb{E}}
\newcommand{\expecp}{\expec_\prob}
\newcommand{\expecq}{\expec_\qprob}
\newcommand{\Lb}{\mathbb{L}}
\newcommand{\lz}{\Lb^0}
\newcommand{\li}{\Lb^\infty}
\newcommand{\lip}{\li_{+}}
\newcommand{\lzp}{\lz_{+}}
\newcommand{\lp}{\Lb_+}
\newcommand{\lonep}{\Lb_+^1}
\newcommand{\lonepq}{\lonep(\qprob)}
\newcommand{\F}{\mathcal{F}}
\newcommand{\ud}{\mathrm d}
\newcommand{\pbdd}{$\lzp$-bounded}
\newcommand{\simplex}{\triangle^{\Natural}}
\newcommand{\hhat}{\widehat{h}}
\newcommand{\pare}[1]{\left(#1\right)}
\newcommand{\bra}[1]{\left[#1\right]}
\newcommand{\dbra}[1]{[\kern-0.15em[ #1 ]\kern-0.15em]}
\newcommand{\dbraco}[1]{[\kern-0.15em[ #1 [\kern-0.15em[}
\newcommand{\dbraoc}[1]{]\kern-0.15em] #1 ]\kern-0.15em]}
\newcommand{\C}{\mathcal{C}}
\newcommand{\Sl}{\mathcal{S}}
\newcommand{\Slo}{\Sl^\circ}
\newcommand{\indic}{\mathbb{I}}
\newcommand{\absco}{{<\kern-0.53em<}}
\newcommand{\hti}{{\widetilde{h}}}
\newcommand{\PP}{{\mathbb P}}
\newcommand{\FF}{{\mathcal F}}
\newcommand{\el}{{\mathbb L}} 
\newcommand{\lzer}{\el^0}
\newcommand{\lone}{\el^1}
\newcommand{\lpee}{\el^p}
\newcommand{\linf}{\el^{\infty}}
\renewcommand{\conv}{\mathrm{conv}}
\renewcommand{\oconv}{\overline{\conv}}
\begin{document}

\title[Forward-convex convergence of sequences in
$\lz_+$]{Forward-convex convergence in probability of sequences of nonnegative random variables}%
\author{Constantinos Kardaras}%
\address{Constantinos Kardaras, Mathematics and Statistics Department,
  Boston University, 111 Cummington Street, Boston, MA 02215, USA.}%
\email{kardaras@bu.edu}%

\author{Gordan {\v{Z}}itkovi{\'{c}}}%
\address{Gordan {\v{Z}}itkovi{\'{c}}, Department of Mathematics,
  University of Texas at Austin, 1 University Station, C1200, Austin,
  TX 78712, USA}%
\email{gordanz@math.utexas.edu}%

\thanks{The authors would like to thank Freddy Delbaen and Ted Odell for valuable help, numerous conversations and shared expertize. Both authors acknowledge partial support by the National Science Foundation; the first under award number DMS-0908461, and the second under award number DMS-0706947. Any opinions, findings and conclusions or recommendations expressed in this material are those of the authors and do not necessarily reflect those of the National Science Foundation.}%
\subjclass[2000]{46A16; 46E30; 60A10}

\date{\today}%
\begin{abstract}
  For a sequence $(f_n)_{\nin}$ of nonnegative random variables, we provide simple necessary
  and sufficient conditions for convergence in probability of each sequence $(h_n)_{\nin}$ with $h_n\in\conv(\set{f_n,f_{n+1},\dots})$ for all $\nin$ to
  the same limit. These conditions correspond to an essentially measure-free version of the notion of uniform integrability.
\end{abstract}

\maketitle


\section*{Introduction}

A growing body of work in applied probability in general, and in the
field of mathematical finance in particular, has singled out 
$\lzer$, the Fr\'echet space of a.s.-equivalence classes of random
variables topologized by the convergence in probability, as
especially important (see, e.g., \cite{MR1768009, FilKupVog09, KraSch99, Zit09}). Reasons for this are multiple, but if a
single commonality is to be found, it would have to be the fact that
$\lzer$ is measure-free. More precisely, the $\lzer$-spaces built over
the same measure space with different probabilities will coincide as
long as the probabilities are equivalent. The desirability and necessity of
the measure-free property in mathematical finance stems from the
central tenet of replication (popularized by the work of Black,
Scholes, Merton and others) which finds its mathematical expression in
the theory of stochastic integration. Since replication amounts to
complete removal of risk, the probability measure under which a
financial system is modeled should not matter, modulo its negligible
sets. On the other hand, given that general stochastic integration
does not admit a canonical pathwise definition, we are left with
$\lzer$ as the only proper setting for the theory. The only other
measure-free member of the $(\lpee)_{p \in [0, \infty]}$ family, namely $\linf$, turns out
to be inadequately small for a large number of  modeling tasks.

It is important to note that the interplay between $\lzer$, the
measure-free property, and stochastic integration, reaches farther into the
history  than the relatively recent progress in
mathematical finance. The seminal work of Stricker (\cite{Str75}) on
the semimartingale property under absolutely-continuous changes of
measures and the celebrated result of Dellacherie and Bichteller
(\cite{Bic79,Bic81,Del80}) on the theory of $\lzer$-integrators are
but two early examples. Even before that, results related to the
 measure-free structure of $\lzer$, but without relation to
stochastic integration, have been published (see, e.g.,
\cite{BuhLoz73,Nik71}).

While $\lzer$ seems to fit the modeling requirements perfectly, there
is a steep price that needs to be paid for its use: a large
number of classical functional-analytic tools which were developed
for locally-convex (and, in particular, Banach) spaces
must be renounced. Indeed, $\lzer$ fails the local-convexity property
in a dramatic fashion: if $(\Omega,\FF,\PP)$ is non-atomic, the
topological dual of $\lzer$ is trivial (see \cite{KalPecRob84},
Theorem 2.2, p.~18).  Therefore, a new set of 
tools which do not rely on local convexity (and the related principles
such as the Hahn-Banach theorem) are needed to treat even the most
basic applied problems. Specifically, convexity has to be ``supplied
endogenously'', leading to various substitutes for indispensable
notions such as compactness (see \cite{DelSch99,Kom67,Zit09}). A
central idea behind their introduction is that a passage to a sequence
of convex combinations, instead of a more classical passage to a
subsequence, yields practically the same analytic benefit, while
working much better with the barren structure of $\lzer$. The situation is not as streamlined as in the classical case where
true subsequences are considered. Indeed, there are examples of
sequences $(f_n)_{\nin}$ in $\lzp$ (the nonnegative orthant of
$\lzer$) that converge to zero, whereas the set of all possible limits
of the convergent sequences $(h_n)_{\nin}$ such that
$h_n\in\conv(\set{f_n,f_{n+1},\dots})$ is the {\em entire $\lzp$} (see
Example \ref{exa:all-limit} for details). 

It is a goal of the present paper to give necessary and sufficient
conditions on a sequence $(f_n)_{\nin}$ in $\lzer_+$ to be {\em
  forward-convexly convergent}, i.e., such that each sequence of its
forward convex combinations (meaning a sequence $(h_n)_{\nin}$ with
$h_n\in\conv (\set{f_n,f_{n+1},\dots})$ for all $\nin$) converges in
$\lzer_+$ to the same limit. Arguably, forward-convex convergence
plays as natural a role in $\lzer$ as the strong convergence does in
$\lone$-spaces. It rules out certain pathological limits and, as will
be shown, imposes a measure-free locally-convex structure on the
sequence. Put simply, it brings the benefits of local convexity to a
naturally non-locally-convex framework.

As far as \emph{sufficient} conditions for forward-convex convergence
are concerned, the reader will quickly think of an example: almost
sure convergence of the original sequence will do, for instance.
Other than the obvious ones, useful \emph{necessary} conditions are
much harder to come by, and it is therefore surprising that one of our
main results has such a simple form. It says, \emph{inter alia}, that a
sequence $(f_n)_{\nin}$ is forward-convexly convergent {\em if and only if}
there exists a probability measure $\qprob$ in the equivalence class that generates the topology of $\lz$ such
that $(f_n)_{\nin}$ is $\Lb^1(\qprob)$-convergent. Effectively, it
identifies forward-convex convergence as a measure-free version of the
notion of uniform integrability.

Our main result also shows that failure of forward-convex convergence carries an interesting structure with it. In fact, when an $\lzp$-valued sequence that is $\lz$-convergent to $f \in \lzp$ fails to be forward-convexly convergent, the set $\C$ of all possible limits of its forward convex combinations is a \emph{strict} superset of $\set{f}$. A surprising amendment we add to
this statement is that $f \leq g$ holds in the almost sure sense for all $g\in\C$ --- in words, the limit $f$ is always the
smallest element in $\C$.  This extremality property can be viewed as
a measure-free no-loss-of-mass condition on the original sequence,
giving further support to the interpretation of forward-convex convergence as
a variant of uniform integrability.

\smallskip

After this introduction, we give a brief review of the notation and terminology and state
our main result in Section \ref{sec: results}. The proof of our main result is presented in Section \ref{sec: proof of main thm}.

\section{The Result} \label{sec: results}

\subsection{Preliminaries}

Let $(\Omega, \F, \oprob)$ be a probability space, and let $\Pi$ be
the collection of all probabilities on $(\Omega, \F)$ that are
equivalent to (the representative) $\oprob \in \Pi$.
All probabilities
in $\Pi$ have the same sets of zero measure, which we shall be calling
\textsl{$\Pi$-null}. We write ``$\Pi$-a.s.'' to mean $\prob$-a.s. with
respect to any, and then all, $\prob \in \Pi$.

By $\lp$ we shall be denoting the set of all (equivalence classes
modulo $\Pi$ of) \emph{possibly infinite-valued} nonnegative random
variables on $(\Omega, \F)$. We follow the usual practice of not
differentiating between a random variable and the equivalence class it
generates in $\lp$. The expectation of $f \in \lp$ under $\prob \in
\Pi$ is denoted by $\expecp [f]$. For fixed $\prob \in \Pi$, we define
a metric $d_\prob$ on $\lp$ via $d_\prob(f, g) = \expecp \bra{
  \abs{\exp(-f) - \exp(-g)}}$ for $f \in \lp$ and $g \in \lp$. The
topology on $\lp$ that is induced by the previous metric does not
depend on $\prob \in \Pi$; convergence of sequences in this topology
is simply (extended) convergence in probability under any $\prob \in
\Pi$.

A set $\C \subseteq \lp$ is \textsl{convex} if $\pare{\alpha f + (1 -
  \alpha) h} \in \C$ whenever $f \in \C$, $g \in \C$ and $\alpha \in
[0,1]$, where  the multiplication convention $0 \times
\infty = 0$ is used. For $\A \subseteq \lp$, $\conv(\A)$ denotes the smallest
convex set that contains $\A$; $\conv(\A)$ is just the set of all
possible finite convex combinations of elements in $\A$. Further,
$\oconv(\A)$ will denote the $\lp$-closure of $\conv(\A)$.

The set of all $f \in \lp$ such that $\set{f =
  \infty}$ is $\Pi$-null is denoted by $\lzp$. We endow $\lzp$ with
the restriction of the $\lp$-topology; convergence of sequences under
this topology is simply convergence in probability under any $\prob
\in \Pi$. When we write $\plimn f_n = f$, we tacitly imply that both
the sequence $(f_n)_{\nin}$ and the limit $f$ are elements of $\lzp$.

\subsection{Forward-convex convergence}
The following the a central notion of the paper.

\begin{defn}
Let $(f_n)_{\nin}$ be a sequence in $\lp$. Any sequence $(h_n)_{\nin}$ with the property that $h_n \in \conv(\set{f_n, f_{n+1}, \ldots })$ for all $\nin$ will be called a \textsl{sequence of forward convex combinations of $(f_n)_{\nin}$}.
\end{defn}

Since $\lzp$ is not a locally convex space, $\lzp$-convergence  of a sequence $(f_n)_{\nin}$ does not imply that sequences of forward convex combinations $(f_n)_{\nin}$ $\lzp$-converge to the same limit (or, for that matter, to any limit at all). We give an example of a quite pathological behavior.
\begin{exa}
\label{exa:all-limit}
  Take $\Omega = (0, 1]$ equipped with the Borel $\sigma$-field and
Lebesgue measure $\prob$, and define the sequence $(f_n)_{\nin}$  by 
\[
f_{n} = (m -1) 2^{m - 1} \indic_{((k-1) / 2^{m - 1}, \, k
/ 2^{m - 1}]}, \text{ for }n=2^{m-1} + k-1 \text{ with } m \in \Natural \text{ and } 1 \leq k \leq
2^{m-1}.
\]
It is straightforward to check that $\plimn f_n = 0$, but as we shall
show below,  this sequence behaves in a strange way: for any $f \in
\lzp$, there exists a sequence $(h_n)_{\nin}$ of forward convex
combinations of $(f_n)_{\nin}$ such that $\plimn h_n = f$.

We start by noting that it suffices to establish the above claim only for $f \in \lip$;
and, consequently, pick $f\in\lip$ with  $f \leq M$ for some $M \in \Real_+$. For each $m \in
\Natural$, let $\F_m$ be the $\sigma$-field on $\Omega$ generated by
the intervals $((k-1)2^{-m}, \, k 2^{-m}]$, $1 \leq k \leq 2^{m}$.
For $m \in \Natural$, define $g_m \dfn \expecp[f \such \F_m]$; by the martingale convergence theorem,  $\plim_{m \to
\infty} g_m = f$. Furthermore,
\[
g_m = \sum_{k =1}^{2^m} 2^m \expecp \bra{f \indic_{((k-1)/2^{m}, \, k
/2^{m}]} } \indic_{((k-1) / 2^{m}, \, k / 2^{m}]} = \sum_{k =1}^{2^m}
\frac{\expecp \bra{f \indic_{((k-1)/2^{m}, \, k /2^{m}]} }}{m} f_{2^m
+ k - 1}.
\]
Set $\alpha_{m, k} = m^{-1} \expecp \bra{f \indic_{((k-1)/2^{m}, \, k
/2^{m}]} } \in \Real_+$ for $m \in \Natural$ and $1 \leq k \leq 2^m$, so that, 
for $m \geq M$, we have
\[
\sum_{k=1}^{2^m} \alpha_{m, k} = \frac{\expecp[f]}{m} \leq \frac{M}{m} \leq 1.
\]
Define the sequence $(h_n)_{\nin}$ as follows: for $m \in \Natural$ with $m < M$, simply set
$h_{2^{m-1} + k-1} = f_{2^{m-1} + k-1}$ for all $1 \leq k \leq 2^{m-1}$,
while for $m \in \Natural$ with $m \geq M$ set
\[
h_{2^{m-1} + k-1} = \pare{1 - \sum_{\ell=1}^{2^m} \alpha_{m, \ell}}
f_{2^{m}} + \sum_{k =1}^{2^m} \alpha_{m, \ell} f_{2^m + \ell - 1} =
\pare{1 - \frac{\expecp[f]}{m}}
f_{2^{m}} + g_m
\]
for all $1 \leq k \leq 2^{m-1}$. Then, $(h_n)_{\nin}$ is a sequence of
forward convex combinations of $(f_n)_{\nin},$ and $\plimn h_n = f$.
\end{exa}

In the above example, note that the limit of $(f_n)_{\nin}$ is clearly minimal (in the $\Pi$-a.s. sense) in the set of all possible limits of sequences of forward convex combinations of $(f_n)_{\nin}$. As Theorem \ref{thm: main} will reveal, this did not happen by chance.

\subsection{The main result}

Having introduced all the ingredients, we are ready to state our main equivalence result.

\begin{thm} \label{thm: main} Let $(f_n)_{n \in \Natural}$ be a
  sequence in $\lz_+$. Assume that
  \begin{equation} \label{eq: CONV} \tag{CONV} \plimn f_n = f
  \end{equation}
  holds for some $f \in \lzp$. Then, the following statements are equivalent:
  \begin{enumerate}

  \item Every sequence of forward convex combinations of $(f_n)_{n \in
      \Natural}$ $\lzp$-converges to $f$.

  \item Whenever a sequence of forward convex combinations of
    $(f_n)_{n \in \Natural}$ is $\lp$-convergent, its $\lp$-limit is
     $f$.

  \item There exists $\qprob \in \Pi$ such that $\sup_{\nin}
    \expecq[f_n] < \infty$ and $\limn \expecq \big[ |f_n - f| \big] =
    0$.
  \end{enumerate}

\begin{itemize}
\item  With \eqref{eq: CONV} holding, and under any of the above equivalent
  conditions, we have
  \begin{equation} \label{eq: C_1 is simplex} \oconv (\set{f_1, f_2,
      \ldots}) = \set{\sum_{\nin} \alpha_n f_n + \pare{1 - \sum_{\nin}
        \alpha_n} f \ \bigg| \ (\alpha_n)_{\nin} \in \simplex}.
  \end{equation}
  where $\simplex$ is the infinite-dimensional simplex:
  \[
  \simplex \dfn \set{\alpha = (\alpha_n)_{\nin} \ \bigg| \ \alpha_n
    \in \Real_+ \text{ for all } \nin, \text{ and } \sum_{\nin}
    \alpha_n \leq 1}.
  \]
  Furthermore, with any $\qprob \in \Pi$ satisfying condition (3) above, $\oconv (\set{f_1, f_2, \ldots})$ is $\lonepq$-compact and the $\lzp$-topology on $\oconv
  (\set{f_1, f_2, \ldots})$ coincides with the $\lonepq$-topology. (In particular, $\oconv (\set{f_1, f_2, \ldots})$ with the
  $\lzp$-topology is locally convex and compact.)

\item With \eqref{eq: CONV} holding, if any of the equivalent conditions above fail, the set $\C \subseteq \lp$ of all possible $\lp$-limits of forward convex combinations of $(f_n)_{\nin}$ is such that $\set{f} \varsubsetneq \C$, and $f$ is minimal in $\C$ in the sense that $f \leq g$ holds $\Pi$-a.s. for all $g \in \C$.
\end{itemize}

  In the special case $f = 0$, the equivalences of the above three
  statements and the properties discussed after them hold even
  \emph{without} assumption \eqref{eq: CONV}.
\end{thm}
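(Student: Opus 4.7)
The plan is to establish $(3) \Rightarrow (1) \Rightarrow (2) \Rightarrow (3)$ and then the two structural bullets. For $(3) \Rightarrow (1)$, if $h_n = \sum_k \alpha^n_k f_k$ with $\sum_k \alpha^n_k = 1$ and $\alpha^n_k$ supported in $\set{k \geq n}$, the estimate
\[
\expecq \bra{|h_n - f|} \leq \sum_k \alpha^n_k \expecq \bra{|f_k - f|} \leq \sup_{k \geq n} \expecq \bra{|f_k - f|}
\]
yields $\lonepq$-convergence, and hence $\lzp$-convergence, of $(h_n)$ to $f$. The implication $(1) \Rightarrow (2)$ is immediate because $\lzp$ sits inside the Hausdorff space $\lp$ with the induced topology, so any $\lp$-limit of a sequence that is $\lzp$-convergent to $f$ must equal $f$.

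The main obstacle is $(2) \Rightarrow (3)$. I would first establish the special case $f = 0$, in which \eqref{eq: CONV} is automatic because $(f_n)$ is itself a forward convex combination sequence. Assuming (2) with $f = 0$ and supposing (3) fails, two cases arise. In Case A, some $\qprob \in \Pi$ satisfies $\sup_n \expecq[f_n] < \infty$ but $\expecq[f_n] \not\to 0$; along a subsequence $\expecq[f_{n_k}] \geq \delta > 0$, and Koml\'os' theorem in $\lonepq$ produces tail Ces\`aro averages that are genuine forward convex combinations of $(f_n)$, almost surely convergent to some $g$ with $\expecq[g] \geq \delta$, contradicting (2). In Case B, $\sup_n \expecq[f_n] = \infty$ for every $\qprob \in \Pi$; by the bipolar-theoretic characterization that a convex subset of $\lzp$ is $\lzp$-bounded if and only if it is $\lonepq$-bounded for some $\qprob \in \Pi$, each tail $\conv(\set{f_k : k \geq m})$ fails $\lzp$-boundedness, and an exhaustion argument in the spirit of Delbaen--Schachermayer produces a sequence of forward convex combinations of $(f_n)$ whose $\lp$-limit is infinite on a set of positive probability, once again contradicting (2). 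Case B is the most delicate point; its heart is the convex $\lzp$-boundedness criterion combined with an $\lp$-limit extraction that converts a uniform probabilistic mass at level $M_n \uparrow \infty$ into a genuine $\lp$-limit valued at $+\infty$. The general case reduces to $f = 0$ by fixing $\qprob_0 \in \Pi$ with $f \in \lone(\qprob_0)$ (e.g.\ $d\qprob_0/d\prob \propto (1+f)^{-1}$); writing $h_n - f = \sum_k \alpha^n_k (f_k - f)^+ - \sum_k \alpha^n_k (f_k - f)^-$ and using that $(f_k - f)^- \leq f$ together with dominated convergence under $\qprob_0$ shows $\sum_k \alpha^n_k (f_k - f)^- \to 0$ in $\lone(\qprob_0)$, so the $\lp$-convergence of $(h_n)$ to $g$ is equivalent to the $\lp$-convergence of the corresponding forward convex combinations of $((f_n - f)^+)$ to $g - f$; hence (2) for $(f_n)$ transfers to (2) for $((f_n - f)^+)$ with limit $0$. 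The $f = 0$ case furnishes $\qprob_1 \in \Pi$ with $\expec_{\qprob_1}[(f_n - f)^+] \to 0$, and re-weighting via $d\qprob/d\qprob_1 \propto (1+f)^{-1}$ yields $\qprob \in \Pi$ with $f \in \lone(\qprob)$; combined with $(f_n - f)^- \leq f$ and dominated convergence, this gives $\expecq[|f_n - f|] \to 0$, establishing (3).

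For the structural claims, under (3) the set $\set{f_n : \nin} \cup \set{f}$ is $\lonepq$-compact, so Mazur's theorem yields $\lonepq$-compactness of its closed convex hull, which coincides with $\oconv(\set{f_1, f_2, \ldots})$ since $f$ lies in the $\lonepq$-closure of $\set{f_n}$. The $\lonepq$-topology is stronger than the $\lzp$-topology and both are Hausdorff, so the identity is a continuous bijection from a compact to a Hausdorff space, and hence a homeomorphism on $\oconv(\set{f_1, f_2, \ldots})$. Formula \eqref{eq: C_1 is simplex} then follows from $\lonepq$-continuity of the map $\simplex \ni (\alpha_n) \mapsto \sum_n \alpha_n f_n + (1 - \sum_n \alpha_n) f$ on the compact simplex $\simplex$, convexity of its image, and containment of $\set{f_n} \cup \set{f}$. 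Finally, for the minimality claim when (2) fails, if $g \in \C$ arises as the $\lp$-limit of $h_n = \sum_k \alpha^n_k f_k$, the pointwise bound $(f - h_n)^+ \leq \sum_k \alpha^n_k (f - f_k)^+$ combined with dominated convergence under $\qprob_0$ (using $(f - f_k)^+ \leq f \in \lone(\qprob_0)$ and $(f - f_k)^+ \to 0$ in probability) forces $(f - h_n)^+ \to 0$ in probability, so along a subsequence almost surely, yielding $f \leq g$ $\Pi$-almost surely.
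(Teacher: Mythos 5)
Your reductions and peripheral arguments are largely sound: $(3)\Rightarrow(1)\Rightarrow(2)$ matches the paper; the reduction of the general case to $f=0$ via $(f_n-f)^+$, with the negative parts controlled by $(f_n-f)^-\le f$ and dominated convergence under a measure integrating $f$, is essentially the paper's Lemma \ref{lem: forw conv combos equiv}; your minimality argument via $(f-h_n)^+\le\sum_k\alpha^n_k(f-f_k)^+$ is a correct and more elementary substitute for the paper's Proposition \ref{prop: domination} (which uses the concave transform $U(x)=1-\e^{-x}$); and the Mazur-theorem treatment of the compactness and coincidence-of-topologies claims is a clean alternative to the explicit diagonalization in Lemma \ref{lem: C1 is compact}.

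The gap is in Case A of $(2)\Rightarrow(3)$ for $f=0$, which is the heart of the theorem. Koml\'os' theorem does give a subsequence whose forward Ces\`aro averages converge a.s.\ to some $g$, but Fatou's lemma only yields $\expecq[g]\le\liminf_k \expecq[g_k]$; the asserted inequality $\expecq[g]\ge\delta$ is false in general because mass can escape in the limit. Concretely, take $f_n=n\,\indic_{(0,1/n]}$ on $(0,1]$ with Lebesgue measure $\qprob$: then $\expecq[f_n]=1$ for all $\nin$, yet every sequence of forward convex combinations is supported on $(0,1/n]$ and so converges a.s.\ to $0$; conditions $(1)$ and $(2)$ hold, and no contradiction is available --- indeed $(3)$ holds, but for a \emph{different} measure (e.g.\ $\ud\qprob'/\ud x = 2x$). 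This shows the hypotheses of your Case A are consistent with $(2)$, so no argument can derive a contradiction from them; the case split rests on the misconception that the failure of $(3)$ can be detected one measure at a time. What is actually required is the \emph{construction} of a new $\qprob\in\Pi$ under which the expectations vanish. The paper does this by showing, via the bipolar theorem, that $\C=\set{0}$ forces $\bigcup_{\nin}\Slo_n$ (the polars of the solid hulls of the tails) to be dense in $\lzp$, which produces for each $\eps>0$ a set $A_\eps$ with $\prob[\Omega\setminus A_\eps]\le\eps$ and $\limn\expecp[f_n\indic_{A_\eps}]=0$, and then assembles $\qprob$ from these sets (Lemma \ref{lem: help with L-1 conv}). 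Nothing in your proposal substitutes for this step. (Your Case B, the unboundedness alternative, is in the right spirit --- it corresponds to \S\ref{subsec: proof that C1 is bdd} of the paper --- but it is only sketched and does not repair Case A.)
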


Implications $(1) \Rightarrow (2)$ and $(3) \Rightarrow (1)$ are straightforward, and \eqref{eq: CONV} is \emph{not} required. Indeed, $(1) \Rightarrow
(2)$ is completely trivial. Also, implication $(3)
\Rightarrow (1)$ is immediate since
\[
\limsupn \expecq \bra{\abs{h_n - f}} \leq
\limsupn \pare{\sup_{\Natural \ni k \geq n} \expecq \bra{|f_k - f|} }
= 0
\]
holds for any sequence $(h_n)_{n \in \Natural}$ of forward convex
combinations of $(f_n)_{\nin}$. The proof of implication $(2) \Rightarrow (3)$ is significantly
harder, and will be discussed in Section \ref{sec: proof of main thm}.

\begin{rem}
  Consider an $\lzp$-convergent sequence $(f_n)_{n \in \Natural}$, and
  set $f \dfn \plimn f_n$. From a qualitative viewpoint, Theorem
  \ref{thm: main} aids our understanding of the cases where a sequence
  $(h_n)_{\nin}$ of forward convex combinations of $(f_n)_{\nin}$
  $\lzp$-converges to a limit other than $f$. Indeed, in those cases
  $f$ is ``suboptimal'' in a very strong sense: all other possible
  limits of sequences of forward convex combinations of $(f_n)_{\nin}$
dominate it in the $\Pi$-a.s. pointwise sense.  
\end{rem}

\begin{rem}
  In the special case $f = 0$, \eqref{eq: CONV} is not needed in
  Theorem \ref{thm: main}. However, when $f \neq 0$, \eqref{eq: CONV}
  is crucial for $(2) \Rightarrow (1)$ of Theorem \ref{thm: main} to
  hold. We present an example to illustrate this fact. Assume that $(\Omega, \F, \prob)$ is rich enough to accommodate a
  sequence $(f_n)_{\nin}$ of random variables that are independent
  under $\prob$ and have identical distributions given by $\prob[f_n =
  0] = \prob[f_n = 2] = 1/2$. By Kolmogorov's zero-one law, it follows
  that any possible $\lp$-limit of sequences of convex combinations of
  $(f_n)_{\nin}$ has to be constant. Now, $(f_n)_{\nin}$ is uniformly
  integrable (in fact, uniformly bounded) under $\prob$, which means that the set $\C$ of all possible $\lp$-limit of sequence of convex combinations of $(f_n)_{\nin}$ is $\C = \{ 1 \}$. With $f = 1$ we have $(2)$ of Theorem \ref{thm: main}
  holding. However, both $(1)$ and $(3)$ fail.
\end{rem}

\section{Proof of Theorem \ref{thm: main}} \label{sec: proof of main thm}

\subsection{Preparatory remarks}

We start by mentioning a result \cite[Lemma A1.1]{MR1304434}, which will be used in a few places throughout the proof of Theorem \ref{thm: main}. Recall that a set $\B \subseteq \lz_+$ is called \pbdd \ if $\downarrow \lim_{\ell
  \to \infty} \sup_{f \in \B} \prob[f > \ell] = 0$ holds for some (and
then for all) $\prob \in \Pi$. If $\B \subseteq \lz_+$ is \pbdd, its
$\lp$-closure is a subset of $\lzp$, and coincides with its
$\lzp$-closure.

\begin{lem} \label{lem: A1.1}
Let $(g_n)_{\nin}$ be an $\lzp$-valued sequence. Then, there exists $h \in \lp$ and a sequence $(h_n)_{\nin}$ of forward convex combinations of $(g_n)_{\nin}$ such that $\lp$-$\limn h_n = h$. If, furthermore, $\conv \set{g_n \such \nin}$ is \pbdd, then $h \in \lzp$.
\end{lem}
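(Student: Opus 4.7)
The plan is to construct the forward convex combinations $(h_n)_{\nin}$ as a near-maximizing sequence for a suitable strictly concave, $\lp$-continuous functional on the decreasing convex sets $\C_n \dfn \conv\set{g_k \such k \geq n}$, and then deduce $\lp$-convergence directly from the strict concavity, without any use of Komlós-type or weak $L^2$-compactness machinery. Fix $\prob \in \Pi$, and let $\phi \colon [0, \infty] \to [0, 1]$ be the strictly concave, strictly increasing homeomorphism $\phi(x) \dfn 1 - \e^{-x}$; define $F \colon \lp \to [0, 1]$ by $F(h) \dfn \expecp[\phi(h)]$. Since $\phi$ is a homeomorphism, $\lp$-convergence coincides with convergence in probability of $\phi$-images, and bounded convergence makes $F$ continuous and (pointwise-concave-induced) concave. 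The values $v_n \dfn \sup\set{F(h) \such h \in \C_n}$ are nonincreasing in $n$ (as $\C_{n+1} \subseteq \C_n$), so they converge to some $v_\infty \in [0, 1]$; pick $h_n \in \C_n$ with $F(h_n) \geq v_n - 1/n$, yielding a forward-convex-combination sequence of $(g_n)_{\nin}$ with $\limn F(h_n) = v_\infty$.

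The key step is to show that $(h_n)_{\nin}$ is Cauchy in $\lp$, i.e.\ that $(\phi(h_n))_{\nin}$ is Cauchy in probability. If not, there exist $\eps > 0$ and subsequences $(h_{n_k})_{\kin}$, $(h_{m_k})_{\kin}$ with $\min(n_k, m_k) \to \infty$ and $\prob\bra{\abs{\phi(h_{n_k}) - \phi(h_{m_k})} \geq \eps} \geq \eps$ for every $\kin$. For the specific $\phi(x) = 1 - \e^{-x}$ one has the exact concavity-gap identity
\[
\phi\!\left(\tfrac{s + t}{2}\right) - \tfrac{1}{2}\pare{\phi(s) + \phi(t)} \eq \tfrac{1}{2}\pare{\sqrt{1 - \phi(s)} - \sqrt{1 - \phi(t)}}^2,
\]
and combining this with the elementary bound $\abs{\phi(s) - \phi(t)} \leq 2 \abs{\sqrt{1 - \phi(s)} - \sqrt{1 - \phi(t)}}$ (valid since $\phi(s), \phi(t) \in [0,1]$) produces a \emph{uniform} pointwise lower bound of $\eps^2/8$ for the concavity gap on the event $\set{\abs{\phi(h_{n_k}) - \phi(h_{m_k})} \geq \eps}$. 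Integrating, the midpoint $h_k^\star \dfn \tfrac{1}{2}(h_{n_k} + h_{m_k}) \in \C_{\min(n_k, m_k)}$ satisfies $F(h_k^\star) \geq \tfrac{1}{2}\pare{F(h_{n_k}) + F(h_{m_k})} + \eps^3/8$. Sending $k \to \infty$ gives $v_\infty + \eps^3/8 \leq v_\infty$, a contradiction. Hence $(h_n)_{\nin}$ is Cauchy in the complete metric space $\lp$ and converges to some $h \in \lp$.

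For the second assertion, if $\conv\set{g_n \such \nin}$ is \pbdd, each $h_n$ lies in it, so $\sup_{\nin} \prob[h_n > M] \to 0$ as $M \to \infty$. Since $h_n \to h$ in $\lp$, at every continuity point $M$ of the distribution of $h$ one has $\prob[h > M] = \limn \prob[h_n > M]$; taking $M \to \infty$ along such points (which are dense in $\Real_+$) shows $\prob[h = \infty] = 0$, i.e.\ $h \in \lzp$. The main technical obstacle throughout is the uniform quantitative strict-concavity estimate, since $\phi$ degenerates near $\infty$: a generic strictly concave function would fail to give a gap lower bound independent of the ambient values of $s, t$, but the displayed square identity specific to $1 - \e^{-x}$ is exactly what reduces the gap to an explicit squared difference controlled uniformly by $\abs{\phi(s) - \phi(t)}$.
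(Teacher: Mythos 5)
Your proof is correct. A point of comparison: the paper never proves Lemma \ref{lem: A1.1} itself --- it is quoted from \cite[Lemma A1.1]{MR1304434} --- and your argument is essentially a reconstruction of that classical Delbaen--Schachermayer proof: take near-maximizers of the bounded, strictly concave functional $F(h) = \expecp[1 - \e^{-h}]$ over the nested hulls $\conv\set{g_k \such k \geq n}$, then use a quantitative concavity-gap estimate to show that failure of the Cauchy property would let midpoints exceed the limiting supremum $v_\infty$. Your one genuine refinement is in how the degeneracy of $1 - \e^{-x}$ near $\infty$ is neutralized: rather than truncating (intersecting the bad event with a set where $\min(h_{n_k}, h_{m_k})$ is bounded, as is usually done), you phrase Cauchy-ness directly in the compactified metric via $\abs{\phi(h_{n_k}) - \phi(h_{m_k})} \geq \eps$ and exploit the exact identity $\phi\pare{\tfrac{s+t}{2}} - \tfrac{1}{2}\pare{\phi(s) + \phi(t)} = \tfrac{1}{2}\pare{\e^{-s/2} - \e^{-t/2}}^2$ together with $\abs{a^2 - b^2} \leq 2\abs{a-b}$ on $[0,1]$, which yields the uniform gap $\eps^2/8$ in one line and is exactly the right notion of Cauchy-ness for the $\lp$-convergence the lemma asserts. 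The remaining steps --- completeness of $(\lp, d_\prob)$, the contradiction $v_\infty \geq v_\infty + \eps^3/8$, and the portmanteau argument along continuity points giving $h \in \lzp$ under the \pbdd{} hypothesis --- are all sound; the only cosmetic remark is that the continuity of $F$, though asserted, is never actually used (only concavity and boundedness are).
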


We introduce some notation that will be used throughout the proof: for
$\nin$, set $\C_n\dfn \oconv \pare{\set{f_n, f_{n+1}, \ldots }} \subseteq
\lp$ so that $\C = \bigcap_{n \in \Natural} \C_n$. Also, let $\Sl_n
\subseteq \lp$ be the \emph{solid hull} of $\C_n$: $g \in \Sl_n$ if
and only if $0 \leq g \leq h$ for some $h \in \C_n$. It is clear that
$\Sl_n$ is convex and solid, and that $\C_n \subseteq \Sl_n$. Furthermore, set $\C \dfn \bigcap_{\nin} \C_n \subseteq \lp$. It is clear that $\C$ is the set of all possible $\lp$-limits of sequences of forward convex combinations of $(f_n)_{\nin}$. In particular, condition (2) of Theorem \ref{thm: main} can be succinctly written as $\C = \set{f}$.

We shall split the proof in several steps, indicating each time what is being
proved or discussed. Until the end of subsection \ref{subsec: proof in case f=0}, condition \eqref{eq: CONV} is not assumed.

\subsection{$\C \subseteq \lzp$ implies that $\oconv \pare{\set{f_1, f_2, \ldots }}$ is \pbdd} \label{subsec: proof that C1 is bdd}

We start by showing that $\Sl_n$ is $\lp$-closed, for $\nin$. For that, we pick an $\Sl_n$-valued sequence $(g_k)_{k \in \Natural}$ that converges
$\prob$-a.s.~to $g \in \lp$. Let $(h_k)_{k \in \Natural}$ be a
$\C_n$-valued sequence with $g_k \leq h_k$ for all $k \in
\Natural$. By Lemma \ref{lem: A1.1}, we can extract a sequence
$(\hti_k)_{k \in \Natural}$ of forward convex combinations of
$(h_k)_{k \in \Natural}$ such that $h \dfn \limk \hti_k \in \lp$
$\Pi$-a.s.~exists. Of course, $h \in \C_n$ and it is straightforward
that $g \leq h$. We conclude that $g \in \Sl_n$, i.e., $\Sl_n$ is
$\lp$-closed.

Let $\Sl = \bigcap_{\nin} \Sl_n$; then, $\C \subseteq \Sl$ and $\Sl$
is $\lp$-closed, convex and solid. We claim that $\Sl$ actually is the
solid hull of $\C$; to show this, we only need to establish that for
any $g \in \Sl$ there exists $h \in \C$ with $g \leq h$. For all
$\nin$, since $g \in \Sl \subseteq \Sl_n$, there exists $h_n \in \C_n$
with $g \leq h_n$. By another application of Lemma \ref{lem: A1.1}, we can extract a sequence $(\hti_n)_{n \in
  \Natural}$ of forward convex combinations of $(h_n)_{n \in
  \Natural}$ such that $h \dfn \lp \text{-} \limk \hti_k$
exists. Then, $h \in \C$ and $g \leq h$.

Each $\Sl_n$ is $\lp$-closed, convex and solid; therefore, a
straightforward generalization of \cite[Lemma 2.3]{MR1768009} gives,
for each $n \in \Natural$, the existence of a partition $\Omega =
\Phi_n \cup (\Omega \setminus \Phi_n)$, where $\Phi_n \in \F$, $\set{f
  \indic_{\Phi_n} \such f \in \Sl_n}$ is $\lzp$-bounded, while $h
\indic_{\Omega \setminus \Phi_n} \in \Sl_n$ for all $h \in \lp$.
Clearly, $\C_{n} \supseteq \C_{n+1}$ implies $\Phi_n \subseteq
\Phi_{n+1}$, for all $\nin$. However, since $f_n \in \lzp$, i.e.,
$\set{f_n = \infty}$ is $\Pi$-null for all $\nin$, it follows that
$\Phi_{n+1} = \Phi_{n}$ for all $\nin$. In other words, $\Phi_n =
\Phi_1$ for all $n \in \Natural$. Then, $h \indic_{\Omega \setminus
  \Phi_1} \in \Sl$ for all $h \in \lp$. Since $\C \subseteq \lzp$,
and, therefore, $\Sl \subseteq \lzp$ as well, it follows that $\Omega
\setminus \Phi_1$ is $\Pi$-null. Therefore, $\Sl_1$ is $\lzp$-bounded,
which completes this part of the proof. Observe that all $\Sl_n$,
$\nin$, are convex, solid, \pbdd, and $\lzp$-closed; we shall use this
later.

\subsection{Equivalence of $(1)$, $(2)$ and $(3)$ in Theorem \ref{thm: main} when $f = 0$} \label{subsec: proof in case f=0}

As already discussed, the proofs of $(1) \Rightarrow (2)$ and $(3) \Rightarrow (1)$ are
immediate, and \eqref{eq: CONV} is not used. Here, we  prove $(2)
\Rightarrow (3)$ when $f = 0$ without assuming \eqref{eq: CONV}.

Since $\Sl_1$ is convex, \pbdd \ and $\lzp$-closed, there exists $\prob \in \Pi$ such that $\sup_{h \in \Sl_1} \expecp[h] < \infty$. Although this result is somewhat folklore, we provide here a quick argument for its validity. Fixing a baseline probability $\oprob \in \Pi$, \cite[Theorem 1.1(4)]{Kar10a} implies that there exists $\hhat \in \Sl_1$ such that $\expec_{\oprob} [h / (1 + \hhat) ] \leq 1$ holds for all $h \in \Sl_1$. Define $\prob$ via the recipe $\ud \prob / \ud \oprob = c / \big( 1 + \hhat \big)$, where $c = 1 / \expec_{\oprob} [1 / (1 + \hhat)]$. Then, $\prob \in \Pi$ and
\[
\sup_{h \in \Sl_1} \expecp [h] = c \sup_{h \in \Sl_1} \expec_{\oprob} \bra{ \frac{h}{1 + \hhat}} \leq c < \infty.
\]
In particular, we have $\sup_{\nin} \expecp[f_n] < \infty$. Given the existence of such $\prob \in \Pi$, the following result will be useful in order to extract a probability
$\qprob \in \Pi$ that satisfies condition (3) of Theorem \ref{thm: main}.
\begin{lem} \label{lem: help with L-1 conv} Fix $\prob \in \Pi$ with
  $\sup_{\nin} \expecp[f_n] < \infty$. Then, the following statements
  are equivalent:
  \begin{enumerate}

  \item For some $\qprob \in \Pi$, $\sup_{\nin} \expecq[f_n] < \infty$
    and $\limn \expecq [f_n] = 0$.

  \item For any $\epsilon > 0$, there exists $A_\epsilon \in \F$ such
    that $\prob[\Omega \setminus A_\epsilon] \leq \epsilon$ and $\limn
    \expecp [f_n \indic_{A_\epsilon}] = 0$.
  \end{enumerate}
\end{lem}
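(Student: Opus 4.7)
The plan is to handle each implication separately. Implication $(1) \Rightarrow (2)$ is a standard absolutely-continuous change-of-measure argument, while $(2) \Rightarrow (1)$ is the substantive direction, where the Radon--Nikodym density $\ud \qprob / \ud \prob$ will have to be built by hand.

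For $(1) \Rightarrow (2)$, I would set $Z \dfn \ud \qprob / \ud \prob$; since $\qprob \in \Pi$, one has $Z > 0$ $\prob$-a.s., so $\prob[Z < \delta] \downarrow 0$ as $\delta \downarrow 0$. Given $\epsilon > 0$, choose $\delta = \delta_\epsilon > 0$ small enough that the set $A_\epsilon \dfn \set{Z \geq \delta_\epsilon}$ satisfies $\prob[\Omega \setminus A_\epsilon] \leq \epsilon$. On $A_\epsilon$ we have $\delta_\epsilon f_n \indic_{A_\epsilon} \leq Z f_n$, hence
\[
\expecp \bra{f_n \indic_{A_\epsilon}} \leq \delta_\epsilon^{-1} \expecp \bra{Z f_n} = \delta_\epsilon^{-1} \expecq[f_n] \To 0,
\]
as $n \to \infty$, proving (2).

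For $(2) \Rightarrow (1)$, I would apply (2) at $\epsilon = 2^{-k}$ to produce sets $A_k \in \F$, $k \in \Natural$, with $\prob[\Omega \setminus A_k] \leq 2^{-k}$ and $\limn \expecp[f_n \indic_{A_k}] = 0$. Since $\sum_k \prob[\Omega \setminus A_k] < \infty$, the Borel--Cantelli lemma yields $\prob[\liminf_k A_k] = 1$, which means that the variable
\[
Z_0 \dfn \sum_{k=1}^\infty 2^{-k} \indic_{A_k}
\]
is strictly positive $\prob$-a.s.\ (and bounded above by $1$). Normalizing, set $Z \dfn Z_0 / \expecp[Z_0]$ and $\ud \qprob / \ud \prob \dfn Z$; then $\qprob \in \Pi$ and $Z$ is $\prob$-essentially bounded, so $\sup_{\nin} \expecq [f_n] \leq \norm{Z}_{\infty} \sup_{\nin} \expecp[f_n] < \infty$. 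Finally, each summand in
\[
\expecq[f_n] = \frac{1}{\expecp[Z_0]} \sum_{k=1}^\infty 2^{-k} \expecp \bra{f_n \indic_{A_k}}
\]
is dominated uniformly in $\nin$ by $2^{-k} \sup_{\nin} \expecp[f_n]$, a summable sequence; dominated convergence then allows passing $\limn$ inside the sum, giving $\limn \expecq[f_n] = 0$.

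The only genuine subtlety is ensuring $\qprob \sim \prob$, which requires $Z > 0$ $\prob$-a.s.; this is precisely why one chooses the $A_k$ with complements of summable $\prob$-measure, so that Borel--Cantelli forces almost every $\omega$ into all but finitely many $A_k$ and thus into the support of $Z_0$. Every other step is a standard Fubini/dominated-convergence maneuver that does not depend on any further structure of the sequence $(f_n)_{\nin}$ beyond the uniform $\prob$-integral bound.
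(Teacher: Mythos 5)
Your proof is correct. The direction $(1) \Rightarrow (2)$ is essentially the paper's argument: both truncate the density $Z = \ud\qprob/\ud\prob$ at a level $\delta$ chosen so that $\set{Z \geq \delta}$ has large $\prob$-measure, and bound $\expecp[f_n \indic_{A_\epsilon}] \leq \delta^{-1}\expecq[f_n]$. For the substantive direction $(2) \Rightarrow (1)$, however, you take a genuinely different and in fact cleaner route. The paper first replaces the sets by a nondecreasing family $(B_k)_{\kin}$, extracts a strictly increasing sequence $(n_k)_{\kin}$ with $\expecp[f_n\indic_{B_k}] \leq 1/k$ for $n \geq n_{k-1}$, builds an intermediate sequence $E_n = B_k$ for $n_{k-1} \leq n < n_k$ satisfying $\limn \expecp[f_n\indic_{E_n}] = 0$, and only then defines $Z$ as a step function $c\sum_{\nin} 2^{-n}\indic_{E_n\setminus E_{n-1}}$, estimating $\expecq[f_n]$ by splitting over $E_n$ and its complement. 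You bypass the monotonization and the subsequence extraction entirely: the density $Z_0 = \sum_{k}2^{-k}\indic_{A_k}$ is strictly positive a.s.\ (Borel--Cantelli, or simply $\prob[\bigcap_k(\Omega\setminus A_k)] \leq 2^{-k}$ for every $k$), is bounded, and the interchange $\limn \sum_k 2^{-k}\expecp[f_n\indic_{A_k}] = 0$ follows from dominated convergence for series, with the summable dominating sequence $2^{-k}\sup_{\nin}\expecp[f_n]$ supplied by the standing hypothesis. Both arguments lean on the uniform bound $\sup_{\nin}\expecp[f_n] < \infty$ in the same essential way; yours packages the bookkeeping into a single series-interchange step, which is shorter and, I would say, more transparent.
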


\begin{proof}

First assume $(1)$ in the statement of Lemma \ref{lem: help with L-1 conv}. Define $Z \dfn \ud \qprob / \ud \prob$; then, $\prob[Z > 0] = 1$. For fixed $\epsilon > 0$, let $\delta = \delta (\epsilon) > 0$ be such that, with $A_\epsilon \dfn \set{Z > \delta} \in \F$,
  $\prob[\Omega \setminus A_\epsilon] \leq \epsilon$ holds. Then,
  \[
  \limsupn \expecp[f_n \indic_{A_\epsilon}] = \limsupn \expecq \big[
  (1/Z) f_n \indic_{\set{Z > \delta}} \big] \leq (1 / \delta) \limsup
  \expecq[f_n] = 0.
  \]

Now, assume $(2)$ in the statement of Lemma \ref{lem: help with L-1 conv}. For each $\kin$, let $B_k \in \F$ be such that
  $\prob[\Omega \setminus B_k] \leq 1/k$ and $\limn \expecp[f_n
  \indic_{B_k}] = 0$. By replacing $B_k$ with $\bigcup_{m=1}^k B_m$
  for each $k \in \Natural$ consecutively, we may assume without loss
  of generality that $(B_k)_{\kin}$ is a nondecreasing sequence of
  sets in $\F$ with $\limk \prob[B_k] = 1$, as well as that $\limn
  \expecp[f_n \indic_{B_k}] = 0$ holds for for each fixed
  $\kin$. Define $B_0 = \emptyset$, $n_0 = 0$, and a \emph{strictly}
  increasing $\Natural$-valued sequence $(n_k)_{\kin}$ with the
  following property: for all $\kin$, $\expecp[f_n \indic_{B_k}] \leq
  1 / k$ holds for all $n \geq n_{k - 1}$. (Observe that this is
  trivially valid for $k = 1$.) Then, define a sequence $(E_n)_{\nin}$
  of sets in $\F$ by setting $E_n = B_k$ whenever $n_{k-1} \leq n <
  n_{k}$. It is clear that $(E_n)_{\nin}$ is a nondecreasing sequence,
  that $\limn \prob[E_n] = 1$, and that $\limn \expecp[f_n
  \indic_{E_n}] = 0$. With $E_0 \dfn \emptyset$, define $Z \dfn c
  \sum_{n \in \Natural} 2^{-n} \indic_{E_n \setminus E_{n-1}}$, where
  $c > 0$ is a normalizing constant in order to ensure that
  $\expecp[Z]= 1$. Define $\qprob \in \Pi$ via $\qprob[A] = \expecp[Z
  \indic_A]$ for all $A \in \F$. With $K \dfn \sup_{\nin} \expecp[f_n]
  < \infty$, $\sup_{\nin} \expecq[f_n] \leq c \sup_{\nin} \expecp[f_n]
  = c K < \infty$. Furthermore,
  \[
  \expecq[f_n] = \expecq[f_n \indic_{E_n}] + \expecq[f_n
  \indic_{\Omega \setminus E_n}] \leq c \expecp[f_n \indic_{E_n}] + c
  2^{-n} \expecp[f_n \indic_{\Omega \setminus E_n}] \leq c \expecp[f_n
  \indic_{E_n}] + c K 2^{-n}.
  \]
  Since $\limn \expecp[f_n \indic_{E_n}] = 0$, we obtain $\limn
  \expecq[f_n] = 0$, which completes the argument.
\end{proof}

We continue with the proof of the implication  $(2) \Rightarrow (3)$, \emph{fixing $\prob
  \in \Pi$ with $\sup_{\nin} \expecp [f_n] < \infty$ until the end of
  \S \ref{subsec: proof in case f=0}}.

For any $\A \subseteq \lzp$, define its \textsl{polar} $\A^\circ \dfn
\set{g \in \lzp \such \expecp[g h] \leq 1 \text{ for all } h \in
  \A}$. It is straightforward that $\pare{\bigcup_{\nin} \A_n}^\circ =
\bigcap_{\nin} \A^\circ_n$, for all collections $\set{\A_n \such
  \nin}$ of subsets of $\lzp$. Also, consider the \textsl{bipolar}
$\A^{\circ \circ} \dfn (\A^\circ)^\circ$ of $\A$; Theorem 1.3 of
\cite{MR1768009} states that if a set is convex and solid, $\A^{\circ
  \circ}$ coincides with the $\lzp$-closure of $\A$.

For each $\nin$, $\Sl_n \subseteq \lzp$ is convex, solid and
$\lzp$-closed; therefore, $\Sl_n^{\circ \circ} = \Sl_n$. Since $\Sl =
\bigcap_{\nin}\Sl_n$ is the solid hull of $\C = \set{0}$, i.e., $\Sl =
\set{0}$, we have
\[
\pare{\bigcup_{\nin}\Slo_n}^{\circ \circ}
= \pare{\bigcap_{\nin}\Sl_n^{\circ \circ}}^\circ
= \pare{\bigcap_{\nin}\Sl_n}^\circ = \set{0}^\circ = \lzp.
\]
Since $\bigcup_{\nin}\Slo_n$ is convex and solid, the above means that
the $\lzp$-closure of $\bigcup_{\nin}\Slo_n$ is $\lzp$.

Fix $\epsilon > 0$. Define a $\Natural$-valued and strictly increasing
sequence $(n_k)_{\kin}$ with the following property: for all $\kin$
there exists $g_k \in \Slo_{n_k}$ such that $\prob[|g_k - 2 k| \leq k]
\leq \epsilon 2^{-(k+1)}$. (This can be done in view of the fact that
the $\lzp$-closure of $\bigcup_{\nin}\Slo_n$ is $\lzp$.) In
particular, $\prob[g_k \leq k] \leq \epsilon 2^{-(k+1)}$ and
$\expecp[g_k f_n] \leq 1$ hold for all $\kin$ and $n \geq n_k$. Define
$A_\epsilon \dfn \bigcap_{\kin} \set{g_k > k}$; then, $\prob[\Omega
\setminus A_\epsilon] \leq \epsilon$. Furthermore, for all $k \in
\Natural$ and $n \geq n_k$,
\[
\expecp[f_n \indic_{A_\epsilon}] \leq \expecp[f_n \indic_{\{ g_k > k
  \}} ] \leq \expecp[ (g_k / k) f_n \indic_{\{ g_k > k \}}] \leq (1 /
k) \expecp[ g_k f_n ] \leq 1 / k.
\]
Then, $\limn \expecp[f_n \indic_{A_\epsilon}] = 0$. Invoking Lemma \ref{lem: help with L-1 conv}, we obtain the existence of
$\qprob \in \Pi$ such that $\sup_{\nin} \expecq[f_n] < \infty$ and
$\limn \expecq [f_n] = 0$.

\subsection{A domination result}
The next simple result will be important for the development.

\begin{prop} \label{prop: domination}
Let $(f_n)_{\nin}$ satisfy \eqref{eq: CONV}. Furthermore, let $(g_n)_{\nin}$ be a sequence of forward convex combinations of $(f_n)_{\nin}$ such that $\lp$-$\limn g_n = g$. Then, $\Pi$-a.s., $f \leq g$.
\end{prop}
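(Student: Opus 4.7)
My plan is to prove the domination via a truncation argument. Write each forward convex combination as $g_n = \sum_{k \geq n} \alpha_{n, k} f_k$, a finite sum with nonnegative weights summing to one. The nondecreasing concave function $\phi_M(x) \dfn x \wedge M$ (for $M \in \Natural$) satisfies, by Jensen's inequality applied to convex combinations,
\[
\phi_M(g_n) \, \geq \, \sum_{k \geq n} \alpha_{n, k} \phi_M(f_k) \quad \Pi\text{-a.s.}
\]
I will take expectations against a fixed (but arbitrary) $\prob \in \Pi$, pass to the limit, and undo the truncation by sending $M \to \infty$.

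For the left-hand side: since $g_n \to g$ in $\lp$, we have $\phi_M(g_n) \to \phi_M(g)$ in $\lzp$-probability (the map $\phi_M$ extends continuously to $[0, \infty]$ with $\phi_M(\infty) = M$), and because $0 \leq \phi_M(\cdot) \leq M$, bounded convergence gives $\phi_M(g_n) \to \phi_M(g)$ in $\Lb^1(\prob)$. For the right-hand side: from \eqref{eq: CONV} and the same bounded-convergence argument, $\expecp[|\phi_M(f_k) - \phi_M(f)|] \to 0$ as $k \to \infty$, so
\[
\expecp \Babs{\sum_{k \geq n} \alpha_{n,k} \phi_M(f_k) - \phi_M(f)} \, \leq \, \sum_{k \geq n} \alpha_{n,k} \expecp \bra{ |\phi_M(f_k) - \phi_M(f)|} \, \leq \, \sup_{k \geq n} \expecp \bra{|\phi_M(f_k) - \phi_M(f)|} \, \To \, 0.
\]
Thus $\sum_{k \geq n} \alpha_{n,k} \phi_M(f_k) \to \phi_M(f)$ in $\Lb^1(\prob)$ as well. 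Combining the two limits with the Jensen inequality above yields $\phi_M(g) \geq \phi_M(f)$, and since this holds $\Pi$-a.s.~for every $M \in \Natural$, monotone passage $M \to \infty$ gives $g \geq f$ $\Pi$-a.s.

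The only delicate point is that $g$ is a priori only an element of $\lp$ (so could take the value $\infty$), and $g_n$ converges to $g$ in the extended-valued metric on $\lp$; the truncation $\phi_M$ is precisely what sidesteps this issue, reducing everything to a bounded and uniformly dominated situation where convergence in probability upgrades to convergence in $\Lb^1(\prob)$. Once the inequality is established for every $M$, the conclusion for the original (possibly infinite-valued) random variables is automatic.
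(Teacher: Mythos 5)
Your proof is correct. It rests on the same underlying mechanism as the paper's --- compose with a bounded concave function so that the pointwise Jensen inequality for forward convex combinations survives passage to the limit via bounded convergence --- but the two arguments finish differently. The paper uses the single function $U(x) = 1 - \e^{-x}$, argues by contradiction, conditions $\oprob$ on the event $\{f > g\}$, and compares only the scalar quantities $\expecp[U(f)]$ and $\expecp[U(g)]$, invoking the strict monotonicity of $U$ to reach the contradiction. You instead use the family of truncations $\phi_M(x) = x \wedge M$ and extract the pointwise inequality $\phi_M(g) \geq \phi_M(f)$ $\Pi$-a.s.\ directly, by noting that an a.s.\ inequality between two sequences converging in probability passes to their limits; letting $M \to \infty$ over the countable family of truncation levels then removes the truncation. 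Your route avoids both the conditioning step and the need for strict monotonicity, at the modest cost of carrying the weights $\alpha_{n,k}$ explicitly (the paper gets away with the cruder bound $\expecp[U(g_n)] \geq \inf_{k \geq n} \expecp[U(f_k)]$). One small remark: the upgrades to $\Lb^1(\prob)$-convergence are not actually needed --- convergence in probability of both sides of the Jensen inequality already suffices to conclude $\phi_M(g) \geq \phi_M(f)$ a.s.\ --- but they do no harm.
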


\begin{proof}
By way of contradiction, assume that $\oprob[f > g] > 0$, where $\oprob \in \Pi$, and consider the probability $\prob$ which is $\oprob$ conditioned on the event $\{ f > g\}$. Note that $\prob[f > g] = 1$ and that $\limn f_n = f$ and $\limn g_n = g$ still hold under the measure $\prob$. Let $U: [0, \infty] \mapsto [0,1]$ be the strictly increasing and concave function defined via $U(x) = 1 - \exp(-x)$ for $x \in [0, \infty]$. The dominated convergence theorem implies that $\limn \expecp \bra{U(f_n)} = \expecp \bra{U(f)}$ and $\limn \expecp \bra{U(g_n)} = \expecp \bra{U(g)}$. In view of the concavity of $U$, one has $\expecp \bra{U(g_n)} \geq \inf_{k \geq n} \expecp \bra{U(f_k)}$ for all $\nin$. Since $\limn \inf_{k \geq n} \expecp \bra{U(f_k)} = \expecp \bra{U(f)}$, we conclude that $\expecp\bra{U(f)} \leq \expecp \bra{U(g)}$. The last inequality combined with the fact that $U$ is strictly increasing contradicts $\prob[f > g] = 1$. Therefore, we conclude that, $\Pi$-a.s., $f \leq g$.
\end{proof}

\subsection{Equivalence of $(1)$, $(2)$ and $(3)$ in Theorem \ref{thm: main}: general case} \label{subsec: proof - general case}

We shall now tackle the general case $f \in \lzp$, working under the assumption \eqref{eq: CONV}. Of course $(1) \Rightarrow (2)$ and $(3) \Rightarrow (1)$ are still trivially valid.  The proof of $(2) \Rightarrow (3)$ will be reduced to the special case $f = 0$, which we have already established, via the following result.

\begin{lem} \label{lem: forw conv combos equiv} The following
  statements are equivalent:

  \begin{enumerate}

  \item[(i)] Every sequence of forward convex combinations of
    $(f_n)_{\nin}$ $\lzp$-converges to $f$.

  \item[(ii)] Every sequence of forward convex combinations of $(|f_n -
    f|)_{\nin}$ $\lzp$-converges to zero.

  \end{enumerate}
\end{lem}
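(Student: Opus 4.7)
\medskip

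\textbf{Proof proposal.} The plan is to dispatch the two implications by reducing them to one another via the decomposition $|f_k - f| = (f_k - f)^+ + (f - f_k)^+$, together with the triangle inequality for convex combinations. The easy direction is $(\text{ii}) \Rightarrow (\text{i})$: given any sequence $(h_n)$ of forward convex combinations of $(f_n)$, write $h_n = \sum_k \alpha_{n,k} f_k$ with $\sum_k \alpha_{n,k} = 1$ and $\alpha_{n,k} = 0$ for $k < n$; then
\[
|h_n - f| \, = \, \Babs{\sum_k \alpha_{n,k}(f_k - f)} \, \leq \, \sum_k \alpha_{n,k}|f_k - f| \, =: \, \tilde h_n,
\]
and $(\tilde h_n)$ is a sequence of forward convex combinations of $(|f_n - f|)$, hence $\lzp$-converges to $0$ by (ii), and therefore $h_n \to f$ in probability.

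For the non-trivial direction $(\text{i}) \Rightarrow (\text{ii})$, start with an arbitrary sequence $(\tilde h_n)$ of forward convex combinations of $(|f_n - f|)$, written as $\tilde h_n = \sum_k \alpha_{n,k}|f_k - f|$. Split it as $\tilde h_n = h_n^+ + h_n^-$ with
\[
h_n^+ \, := \, \sum_k \alpha_{n,k}(f_k - f)^+, \qquad h_n^- \, := \, \sum_k \alpha_{n,k}(f - f_k)^+,
\]
and set $h_n := \sum_k \alpha_{n,k} f_k$, so that $h_n^+ - h_n^- = h_n - f$ (using $\sum_k \alpha_{n,k} = 1$). By (i), $h_n \to f$ in probability, so it suffices to prove $h_n^- \to 0$ in probability --- the key point, because it then forces $h_n^+ = (h_n - f) + h_n^- \to 0$ as well, and hence $\tilde h_n \to 0$.

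The main step is therefore to control $h_n^-$, and the saving feature is the a priori bound $0 \leq (f - f_k)^+ \leq f$, which yields $0 \leq h_n^- \leq f$. Since $f \in \lzp$, we can pick $\prob^* \in \Pi$ with $\expec_{\prob^*}[f] < \infty$ (e.g.\ by taking $\mathrm d\prob^* / \mathrm d\oprob \propto 1/(1+f)$). From \eqref{eq: CONV} we have $(f - f_k)^+ \to 0$ in probability with the uniform $\prob^*$-integrable majorant $f$, so dominated convergence gives $\expec_{\prob^*}[(f - f_k)^+] \to 0$. Therefore
\[
\expec_{\prob^*}[h_n^-] \, = \, \sum_k \alpha_{n,k}\expec_{\prob^*}\bra{(f-f_k)^+} \, \leq \, \sup_{k \geq n} \expec_{\prob^*}\bra{(f-f_k)^+} \, \To \, 0,
\]
so $h_n^- \to 0$ in $\Lb^1(\prob^*)$, and hence in probability. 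I expect the only subtle point to be the passage to a probability $\prob^*$ under which $f$ is integrable; this is routine given $f \in \lzp$, but it is the step where the assumption \eqref{eq: CONV} (and the finiteness of $f$) is genuinely used. Nothing beyond elementary manipulations and dominated convergence is required.
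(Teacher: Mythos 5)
Your proof is correct, and it takes a genuinely different and more elementary route than the paper's. The paper also reduces $(i)\Rightarrow(ii)$ to the two pieces $(f-f_n)^+$ and $(f_n-f)^+$, but it handles each piece by combining Proposition \ref{prop: domination} with the already-established special case $f=0$ of Theorem \ref{thm: main}: it first argues (via domination applied to $f\wedge f_n$) that any $\lp$-limit of forward convex combinations of $\pare{(f-f_n)^+}_{\nin}$, resp.\ $\pare{(f_n-f)^+}_{\nin}$, must be zero, and then invokes the hard chain $(2)\Rightarrow(3)\Rightarrow(1)$ of the $f=0$ case --- i.e.\ the bipolar machinery of \S\ref{subsec: proof in case f=0} --- to upgrade ``every limit is zero'' to ``every sequence converges to zero'', and it does so twice. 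You bypass that machinery entirely: the bound $0\leq (f-f_k)^+\leq f$ together with a choice of $\prob^*\in\Pi$ making $f$ integrable shows directly that $\pare{(f-f_k)^+}_{\kin}$ satisfies condition (3) of the theorem with $\qprob=\prob^*$ and limit $0$, so only the trivial implication $(3)\Rightarrow(1)$ is needed to get $h_n^-\to 0$; and the identity $h_n^+=(h_n-f)+h_n^-$ lets hypothesis (i) finish off $h_n^+$ with no appeal to the special case and no use of Proposition \ref{prop: domination}. All the steps check out (the algebraic identities are legitimate since $f$ and the $f_k$ are a.s.\ finite, and the passage from $\Lb^1(\prob^*)$-convergence to convergence in probability is measure-free within $\Pi$). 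The paper's route is natural given that the $f=0$ machinery is already in place and gets reused, but your argument is self-contained, isolates exactly where \eqref{eq: CONV} and the finiteness of $f$ enter, and makes transparent that the ``downward'' part of the sequence is uniformly integrable essentially for free.
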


\begin{proof}
As $(ii) \Rightarrow (i)$ is immediate, we only treat implication $(i) \Rightarrow (ii)$. For $x \in \Real$ and $y \in \Real$, we denote by $x_+$ the positive part of $x$ and by $x \wedge y$ the minimum between $x$ and $y$. We shall first argue that every sequence of forward convex combinations of $\pare{(f - f_n)_+}_{\nin}$ $\lzp$-converges to zero. The facts that $0 \leq f_n \wedge f \leq f$ for all $\nin$ and $\lzp$-$\limn f_n = f$, coupled with Proposition \ref{prop: domination}, imply that whenever a sequence of forward convex combinations of $(f_n \wedge f)_{\nin}$ $\lp$-converges, the limit is $f$. Since $(f - f_n)_+ = f - f \wedge f_n$, it follows that whenever a sequence of forward convex combinations of $\pare{(f - f_n)_+}_{\nin}$ $\lp$-converges, the limit is zero. From the special case of Theorem \ref{thm: main} that we have established previously, it actually follows that every sequence of forward convex combinations of $\pare{(f - f_n)_+}_{\nin}$ $\lp$-converges to zero. Note also that this implies that every sequence of forward convex combinations of $\pare{(f \wedge f_n)_+}_{\nin}$ $\lp$-converges to $f$. We now proceed in showing that every sequence of forward convex combinations of $\pare{(f_n - f)_+}_{\nin}$ $\lp$-converges to zero, which will complete the argument. The fact that $(f_n - f)_+ = f_n - f \wedge f_n$ for all $\nin$ and Proposition \ref{prop: domination} imply that whenever a sequence of forward convex combinations of $\pare{(f_n - f)_+}_{\nin}$ $\lp$-converges, its limit is zero. Once again, by the special case of Theorem \ref{thm: main} that we have established previously, it actually follows that every sequence of forward convex combinations of $\pare{(f_n - f)_+}_{\nin}$ $\lp$-converges to zero. This completes the proof.
\end{proof}

In view of the result of Lemma \ref{lem: forw conv combos equiv} and
the treatment in subsection \ref{subsec: proof in case f=0}, we obtain the
existence of $\qprob \in \Pi$ such that $\sup_{\nin} \expecq [|f_n -
f|] < \infty$ and $\limn \expecq [|f_n - f|] = 0$. Replacing $\qprob$,
if necessary, by $\qprob' \in \Pi$ defined via $\ud \qprob' / \ud
\prob = c (1 + f)^{-1}$ where $c = \pare{\expecq [(1 +
  f)^{-1}]}^{-1}$, we may further assume that $\expecq [f] < \infty$;
in other words, $\sup_{\nin} \expecq [f_n] < \infty$ and $\limn
\expecq [|f_n - f|] = 0$.

\subsection{Proof of claims after the equivalences}

To begin with, assume that any of the equivalent statements of Theorem \ref{thm: main} is not valid. Then, we must have that $\set{f} \subsetneq \C$, since $\C = \set{f}$ is actually statement (2). Then, Proposition \ref{prop: domination} implies that for all $g \in \C$ we have, $\Pi$-a.s., $f \leq g$.

Continuing, assume the validity of any of the equivalent statements of Theorem \ref{thm: main}. The following result will help to establish all the properties of $\C_1$ that are mentioned in Theorem \ref{thm: main}.

\begin{lem} \label{lem: C1 is compact} Let $\C'_1\subseteq \lp$ be the
  set on the right-hand-side of \eqref{eq: C_1 is simplex}. If $\qprob
  \in \Pi$ is such that condition (5) of Theorem \ref{thm: main}
  holds, then $\C'_1$ is $\Lb^1_+ (\qprob)$-compact.
\end{lem}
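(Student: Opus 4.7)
The plan is to prove $\Lb^1_+ (\qprob)$-sequential compactness of $\C'_1$ directly, via a diagonal extraction on the simplex of coefficients combined with the key fact that $f_n \to f$ in $\Lb^1(\qprob)$, which follows from condition (3) of Theorem \ref{thm: main} after observing also that $\expecq[f] \leq \liminf_{n} \expecq[f_n] < \infty$ by Fatou's lemma.

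Given a sequence $(g^{(k)})_{k \in \Natural}$ in $\C'_1$, I write each $g^{(k)}$ in the form
\[
g^{(k)} \eq \sum_{n \in \Natural} \alpha_n^{(k)} f_n + \pare{1 - \sum_{n \in \Natural} \alpha_n^{(k)}} f,
\]
where $\alpha^{(k)} = (\alpha_n^{(k)})_{n \in \Natural} \in \simplex$. By compactness of $[0,1]^\Natural$ in the product topology and a standard diagonal extraction, I pass to a subsequence (still labelled $k$) along which $\alpha_n^{(k)} \to \alpha_n^*$ for every $\nin$. Fatou's lemma yields $\sum_n \alpha_n^* \leq 1$, so $\alpha^* \in \simplex$; I then set $h \dfn \sum_n \alpha_n^* f_n + \pare{1 - \sum_n \alpha_n^*} f$, which lies in $\C'_1$. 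The series defining $h$ converges absolutely in $\Lb^1_+(\qprob)$ because $\sup_n \expecq[f_n] < \infty$.

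The key algebraic cancellation, exploiting that the coefficients of $f$ in $g^{(k)}$ and in $h$ are tied to the remaining coefficients by the $\simplex$-constraint, is
\[
g^{(k)} - h \eq \sum_{n \in \Natural} \pare{\alpha_n^{(k)} - \alpha_n^*}(f_n - f).
\]
With $\epsilon_n \dfn \expecq[|f_n - f|]$, so that $\epsilon_n \to 0$ and $\sup_n \epsilon_n < \infty$, this gives the estimate $\expecq[|g^{(k)} - h|] \leq \sum_n |\alpha_n^{(k)} - \alpha_n^*| \epsilon_n$. Splitting at an arbitrary cutoff $N$, the head $\sum_{n \leq N}$ is a finite sum vanishing as $k \to \infty$ by pointwise convergence of the coefficients, while the tail is bounded uniformly in $k$ by $2 \sup_{n > N} \epsilon_n$; letting first $k \to \infty$ and then $N \to \infty$ yields $g^{(k)} \to h$ in $\Lb^1_+(\qprob)$, establishing the required compactness.

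The main obstacle is the potential loss of mass in the diagonal extraction: one may have $\sum_n \alpha_n^* < \liminf_k \sum_n \alpha_n^{(k)}$ because coefficient mass can escape to $n = \infty$. The construction of $h$ re-assigns this leakage to $f$ through the term $(1 - \sum_n \alpha_n^*) f$, and the displayed identity for $g^{(k)} - h$ is tailored to exploit this. The $\Lb^1(\qprob)$-convergence $f_n \to f$ — and not merely the probability convergence \eqref{eq: CONV} — is then what converts the tail estimate into a vanishing bound, so that escaped mass harmlessly ``lands on $f$'' in the limit; without this stronger convergence the argument would break down at the tail estimate.
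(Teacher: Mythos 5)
Your proof is correct and follows essentially the same route as the paper's: a diagonal extraction on the simplex coefficients, Fatou's lemma to keep the limit coefficient in $\simplex$ (with lost mass absorbed by $f$), and the cancellation $g^{(k)} - h = \sum_{n}(\alpha_n^{(k)} - \alpha_n^*)(f_n - f)$ combined with a head/tail split using $\expecq[|f_n-f|]\to 0$. The only difference is cosmetic: the paper organizes the same estimate through truncated sums $g^{(N)}$, $g_k^{(N)}$ and a three-term triangle inequality rather than splitting the single series directly.
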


\begin{proof}
  First of all, since $\sup_{\nin} \expecq [f_n] < \infty$, which in
  particular implies that $\expecq[f] < \infty$ by Fatou's lemma, it
  is clear that $\sup_{g \in \C'_1} \expecq[g] < \infty$ --- in
  particular, $\C'_1 \subseteq \lzp$.

  We shall show that \emph{any} sequence $(g_k)_{\kin}$ in $\C'_1$ has
  an $\Lb^1_+ (\qprob)$-convergent subsequence. For all $\kin$, write
  $g_k = \sum_{\nin} \alpha_{k, n} f_n + (1 - \sum_{\nin} \alpha_{k,
    n}) f$, where $\alpha_k = (\alpha_{k,n})_{\nin} \in \simplex$. By
  a diagonalization argument, we can find a subsequence of
  $(g_k)_{\kin}$, which we shall still denote by $(g_k)_{\kin}$, such
  that $\alpha_n \dfn \limk \alpha_{k, n}$ exists for all
  $\nin$. Fatou's lemma implies that $\alpha = (\alpha_n)_{\nin} \in
  \simplex$. Let $g \dfn \sum_{\nin} \alpha_{n} f_n + (1 - \sum_{\nin}
  \alpha_{n}) f$. We shall show that $\limk \expecq [|g_k - g|] =
  0$. For $\epsilon > 0$, pick $N = N (\epsilon) \in \Natural$ such
  that $\sup_{\nin} \expecq[|f_{N+n} - f|] \leq \epsilon /2$. Define
  $g^{(N)} \dfn \sum_{n=1}^N \alpha_{n} f_n + (1 - \sum_{n=1}^N
  \alpha_{n}) f$, as well as $g_k^{(N)} \dfn \sum_{n=1}^N \alpha_{k,
    n} f_n + (1 - \sum_{n=1}^N \alpha_{k, n}) f$ for all
  $\kin$. Observe that
  \[
  \expecq \bra{ \abs{g^{(N)} - g}} = \expecq \bra{ \abs{\sum_{\nin}
      \alpha_{N+n} (f_{N+n} - f)}} \leq \sum_{\nin} \alpha_{N + n }
  \expecq \bra{ \abs{ f_{N + n} - f}} \leq \frac{\epsilon}{2}
  \]
  Similarly, $\expecq \big[ \big| g_k^{(N)} - g_k \big| \big] \leq
  \epsilon / 2$ holds for all $\kin$. Furthermore,
  \[
  \limsup_{k \to \infty} \expecq \bra{ \abs{g_k^{(N)} - g^{(N)}}} \leq
  \limsup_{k \to \infty} \pare{\sum_{n=1}^N |\alpha_{k, n} - \alpha_n|
    \, \expecq [|f_n - f|]} = 0.
  \]
  It follows that $\limsup_{k \to \infty} \expecq \bra{ \abs{g_k - g}}
  \leq \epsilon$. Since $\epsilon > 0$ is arbitrary, $\lim_{k \to
    \infty} \expecq \bra{ \abs{g_k - g}} = 0$.
\end{proof}

To finish the proof of Theorem \ref{thm: main}, it remains to show
that $\C_1 = \C'_1$ and that the $\lzp$-topology coincides with the
$\Lb^1_+(\qprob)$-topology on $\C_1$. First of all, since $f \in
\C_1$, $f_n \in \C_1$ for all $\nin$, and $\C_1$ is closed, we have
$\C'_1 \subseteq \C_1$. On the other hand, $\conv(\{ f_1, f_2, \ldots
\}) \subseteq \C'_1$; since $\C'_1$ is $\lzp$-closed by Lemma
\ref{lem: C1 is compact}, $\C_1 = \oconv(\{ f_1, f_2, \ldots \})
\subseteq \C'_1$. Therefore, $\C_1 = \C'_1$. Finally, let
$(g_k)_{\kin}$ be a $\C_1$-valued and $\lzp$-convergent sequence, and
call $g \dfn \plimk g_k \in \C_1$. Lemma \ref{eq: C_1 is simplex}
implies that every subsequence of $(g_k)_{\kin}$ has a further
subsequence that is $\Lb^1_+(\qprob)$-convergent. All the latter
subsequences have to $\Lb^1_+(\qprob)$-converge to $g$, which means
that $(g_k)_{\kin}$ $\Lb^1_+(\qprob)$-converges to $g$.

\bibliographystyle{siam} 
\bibliography{fcc}
\end{document}